\documentclass{article}

\usepackage{authblk}

\usepackage[utf8]{inputenc}
\usepackage[T1]{fontenc}
\usepackage[english]{babel}
\usepackage{textcomp}
\usepackage{amsmath,amssymb,amsthm,mathtools}
\usepackage{lmodern}
\usepackage[a4paper]{geometry}
\usepackage{xcolor, pict2e}
\usepackage{microtype}
\usepackage{listings}
\usepackage{multicol}
\usepackage{moreverb}
\usepackage{hyperref}
\hypersetup{pdfstartview=XYZ}
\usepackage{wrapfig}
\usepackage[sans]{dsfont}

\usepackage{tikz, pgfplots}
\usetikzlibrary{positioning}
\usetikzlibrary{decorations.pathreplacing}

\usepackage{ytableau}
\usepackage{stmaryrd}
\usepackage{mathdots}

\usepackage{soul}
\hypersetup{
    colorlinks=true,
    linkcolor=blue,
    citecolor=magenta,
    urlcolor=blue,
    pdfborder={0 0 0}
}

\DeclareMathOperator{\dtv}{d_{TV}}

\newcommand{\AddColors}{\textup{AddColor}}

\usepackage[font=sf, labelfont={sf,bf}, margin=1cm]{caption}
\usepackage{enumitem}
\setlist[itemize,1]{nosep}
\setlist[enumerate,1]{itemsep=0pt,label=(\alph*)}

\usepackage{float}
\usepackage[caption = false]{subfig}
\usepackage{graphicx}
\usepackage{bm}

\newtheorem*{theorem*}{Theorem}
\newtheorem{theorem}{Theorem}[section]

\newtheorem{lemma}[theorem]{Lemma}

\theoremstyle{remark}
\newtheorem{remark}{Remark}[section]

\usepackage{todonotes}

\newcommand{\old}[1]{}

\newcommand{\cC}{{\ensuremath{\mathcal C}} }

\newcommand{\cP}{{\ensuremath{\mathcal P}} }

\newcommand{\bbE}{{\ensuremath{\mathbb E}} }

\newcommand{\bbP}{{\ensuremath{\mathbb P}} }

\newcommand{\ag}{\left\{ } 

\newcommand{\ad}{\right\} }

\newcommand{\cg}{\left[}
\newcommand{\cd}{\right]}

\newcommand{\lf}{\left\lfloor}
\newcommand{\rf}{\right\rfloor}

\newcommand{\dt}{\mathrm d t}

\newcommand{\du}{{\ensuremath{\;:\;}}} 

\makeatletter
\newcommand*\bigcdot{\mathpalette\bigcdot@{.5}}
\newcommand*\bigcdot@[2]{\mathbin{\vcenter{\hbox{\scalebox{#2}{$\m@th#1\bullet$}}}}}
\makeatother

\numberwithin{equation}{section}

\pgfplotsset{compat=1.18}

\begin{document}

\renewcommand{\theparagraph}{\thesubsection.\arabic{paragraph}} 
\title{The Aldous chain on cladograms mixes in order $n^2$ steps}

\author{Valentin Féray}
\author{Lucas Teyssier}
\affil{Université de Lorraine, CNRS, IECL, F-54000 Nancy,\\ \texttt{$\{$valentin.feray,lucas.teyssier$\}$@univ-lorraine.fr}}
\date{}
\maketitle

\begin{abstract}
    Cladograms of size $n$ are unrooted binary trees whose leaves are labelled from 1 to $n$. Aldous introduced in 2000 a Markov chain on cladograms, 
    a step of which consists of removing a leaf uniformly at random and reinserting it on a uniformly chosen edge. 
We introduce a coupling for this walk,  which follows multiple colored subtrees in parallel. We go around the lack of independence of the colored components by finding a relevant statistic, namely the sum of the squares of the sizes of all colored components, which we prove has a drift. We deduce that the mixing time of the walk is of order $n^2$, solving a conjecture of Aldous. 
\end{abstract}


\section{Introduction}

\subsection{Background}

The mixing time of a Markov chain is the time needed for the distribution of the chain to be close to stationarity. 
Mixing times have been widely studied since the work of Diaconis and Aldous in the 1980s (see for instance \cite{DiaconisShahshahani1981, Aldous1983mixing, AldousDiaconis1986}). 
We refer to the textbooks \cite{LivreLevinPeres2019MarkovChainsAndMixingTimesSecondEdition, LivreDiaconisFulman2023} and the recent lecture notes \cite{Salez2025ModernAspectsOfMarkovChainsSaintFlour} for more history and results on mixing times.

In general, determining the mixing time of a Markov chain is a difficult problem, and sometimes, even
its order of magnitude is unknown.
This is the case of Aldous' walk on cladograms~\cite{Aldous2000MixingCladograms}. 
This Markov chain was introduced in 2000 by Aldous, who proved that the order of magnitude of the mixing time is between $n^2$ and $n^3$ and conjectured that the correct order is $n^2$ (see \cite[below Theorem 1]{Aldous2000MixingCladograms}).
Since then, the relaxation time was proved to be of order $n^2$ \cite{Schweinsberg2002RelaxationTimeCladograms}, and the continuum limit of the model was recently understood \cite{LohrMytnikWinter2020CladogramsDiffusionLimit,forman2023Aldous-diffusion}, proving two other conjectures of Aldous, but no progress was made on the mixing time.
In this paper, we establish an upper bound of order $n^2$ on the mixing time, confirming Aldous' conjecture.

\subsection{The model}\label{s: the model}
Let $n\geq 3$, and let $\cC_n$ be the set of cladograms of size $n$, that is, of binary trees (unrooted, and not assumed to be planar) with $n$ leaves labelled from 1 to $n$, as illustrated in Figure~\ref{fig: cladogram example 20 uncolored}. Following Aldous \cite{Aldous2000MixingCladograms}, we consider
the following Markov chain on $\cC_n$.
Each step is decomposed into two half-steps that we now describe. Let $X \in \cC_n$.
The following construction is illustrated in Figure~\ref{fig: illustration of one step of the cladogram chain}.

\begin{figure}[tbp]
\centering

\begin{tikzpicture}[
    scale=0.65,
    transform shape,
    every node/.style={
        circle,
        draw,
        minimum size=6mm,
        inner sep=0pt,
        font=\large
    },
    thick
]

\def\radiusone{2.60}
\def\radiustwo{4.80}
\def\radiusthree{6.50}
\def\radiusfour{7.80}
\def\radiusfive{8.70}

\def\layoutrotation{-90}
\def\verticalcompression{0.6}

\newcommand{\radialposition}[2]{%
    ({#2*cos(#1+\layoutrotation)},%
     {\verticalcompression*#2*sin(#1+\layoutrotation)})%
}

\node (C) at (0,0) {};


\node (A0)
    at \radialposition{150}{\radiusone} {};

\node (A1)
    at \radialposition{190}{\radiustwo} {};
\node (A2)
    at \radialposition{130}{\radiustwo} {};

\node (L1)
    at \radialposition{200}{\radiusthree} {$18$};
\node (L2)
    at \radialposition{180}{\radiusthree} {$7$};
\node (L3)
    at \radialposition{160}{\radiusthree} {$16$};
\node (A3)
    at \radialposition{120}{\radiusthree} {};

\node (L4)
    at \radialposition{140}{\radiusfour} {$2$};
\node (A4)
    at \radialposition{110}{\radiusfour} {};

\node (L5)
    at \radialposition{120}{\radiusfive} {$5$};
\node (L6)
    at \radialposition{100}{\radiusfive} {$6$};


\node (B0)
    at \radialposition{30}{\radiusone} {};

\node (L7)
    at \radialposition{80}{\radiustwo} {$15$};
\node (B1)
    at \radialposition{21.667}{\radiustwo} {};

\node (B2)
    at \radialposition{46.667}{\radiusthree} {};
\node (B4)
    at \radialposition{-3.333}{\radiusthree} {};

\node (L8)
    at \radialposition{63.333}{\radiusfour} {$13$};
\node (B3)
    at \radialposition{38.333}{\radiusfour} {};
\node (L11)
    at \radialposition{13.333}{\radiusfour} {$1$};
\node (B5)
    at \radialposition{-11.667}{\radiusfour} {};

\node (L9)
    at \radialposition{46.667}{\radiusfive} {$14$};
\node (L10)
    at \radialposition{30}{\radiusfive} {$11$};
\node (L12)
    at \radialposition{-3.333}{\radiusfive} {$12$};
\node (L13)
    at \radialposition{-20}{\radiusfive} {$19$};


\node (D0)
    at \radialposition{270}{\radiusone} {};

\node (D1)
    at \radialposition{311.667}{\radiustwo} {};
\node (D2)
    at \radialposition{253.333}{\radiustwo} {};

\node (L14)
    at \radialposition{320}{\radiusthree} {$20$};
\node (L15)
    at \radialposition{303.333}{\radiusthree} {$4$};
\node (L16)
    at \radialposition{286.667}{\radiusthree} {$3$};
\node (D3)
    at \radialposition{245}{\radiusthree} {};

\node (D4)
    at \radialposition{261.667}{\radiusfour} {};
\node (D5)
    at \radialposition{228.333}{\radiusfour} {};

\node (L17)
    at \radialposition{270}{\radiusfive} {$8$};
\node (L18)
    at \radialposition{253.333}{\radiusfive} {$10$};
\node (L19)
    at \radialposition{236.667}{\radiusfive} {$9$};
\node (L20)
    at \radialposition{220}{\radiusfive} {$17$};


\foreach \u/\v in {
    C/A0,
    A0/A1,A0/A2,
    A1/L1,A1/L2,
    A2/L3,A2/A3,
    A3/L4,A3/A4,
    A4/L5,A4/L6,
    C/B0,
    B0/L7,B0/B1,
    B1/B2,B1/B4,
    B2/L8,B2/B3,
    B3/L9,B3/L10,
    B4/L11,B4/B5,
    B5/L12,B5/L13,
    C/D0,
    D0/D1,D0/D2,
    D1/L14,D1/L15,
    D2/L16,D2/D3,
    D3/D4,D3/D5,
    D4/L17,D4/L18,
    D5/L19,D5/L20}
{
    \draw (\u)--(\v);
}

\end{tikzpicture}

\caption{A cladogram with 20 leaves.}
\label{fig: cladogram example 20 uncolored}

\end{figure}

\paragraph*{First half-step.} We pick a leaf $\ell$ uniformly at random in $X$. Denote the neighbour of $\ell$ by $u$, and the neighbours of $u$ by $u_1$ and $u_2$. Then we replace the two edges $\ag u,u_1 \ad$ and $\ag u, u_2 \ad$ by a single edge $\ag u_1, u_2\ad$. This gives a cladogram $X'$ with $n-1$ leaves (labeled by the integers from $1$ to $n$, except $\ell$), and an additional component consisting of $u, \ell$ and $\ag u, \ell \ad$.

\paragraph*{Second half-step.} We pick an edge $\ag v_1, v_2 \ad$ of $X'$ uniformly at random (picking $\ag u_1, u_2\ad$ is a possibility), and replace it by two edges $\ag v_1, u\ad$ and $\ag v_2, u\ad$.
Informally, the leaf $\ell$ is attached with its incident edge and vertex on $\ag v_1, v_2 \ad$.
This yields a cladogram $X''$ in $\cC_n$, which is by definition the output of a single step of Aldous' Markov chain applied to $X$.\medskip

\begin{figure}[ht]
\centering

\begin{tikzpicture}[  scale=0.6,  transform shape,
    every node/.style={
        circle,
        draw,
        minimum size=6mm,
        inner sep=0pt
    },
    thick
]

\node (a) at (-2,-1) {};
\node (b) at (-1,0) {};
\node (d) at (-2,1) {};
\node (e) at (-3,2) {};
\node (f) at (-1,2) {};

\node (L1) at (-1,-2) {$1$};
\node (L2) at (0,-1) {$2$};
\node (L3) at (0,3) {$3$};
\node (L4) at (0,1) {$4$};
\node (L5) at (-3,-2) {$5$};
\node (L6) at (-4,1) {$6$};
\node (L7) at (-4,3) {$7$};

\draw (L1)--(a);
\draw (a)--(b);
\draw (a)--(L5);

\draw (b)--(L2);
\draw (b)--(d);

\draw (d)--(e);
\draw (d)--(f);


\draw (e)--(L6);
\draw (e)--(L7);

\draw (f)--(L3);
\draw (f)--(L4);
\end{tikzpicture}
\hspace{3em}
\begin{tikzpicture}[  scale=0.6,  transform shape,
    every node/.style={
        circle,
        draw,
        minimum size=6mm,
        inner sep=0pt
    },
    thick
]

\node[gray] (a) at (-3,-1) {};
\node (b) at (-1,0) {};
\node (d) at (-2,1) {};
\node (e) at (-3,2) {};
\node (f) at (-1,2) {};

\node (L1) at (-2,-1) {$1$};
\node (L2) at (0,-1) {$2$};
\node (L3) at (0,3) {$3$};
\node (L4) at (0,1) {$4$};
\node[gray] (L5) at (-4,-2) {$5$};
\node (L6) at (-4,1) {$6$};
\node (L7) at (-4,3) {$7$};

\draw (L1)--(b);
\draw[gray] (a)--(L5);

\draw (b)--(L2);
\draw (b)--(d);

\draw (d)--(e);
\draw[ultra thick] (d)--(f);


\draw (e)--(L6);
\draw (e)--(L7);

\draw (f)--(L3);
\draw (f)--(L4);
\end{tikzpicture}
\hspace{3em}
\begin{tikzpicture}[  scale=0.6,  transform shape,
    every node/.style={
        circle,
        draw,
        minimum size=6mm,
        inner sep=0pt
    },
    thick
]

\node (a) at (-2, 1) {};
\node (b) at (-2,-1) {};
\node (d) at (-3,0) {};
\node (e) at (-4,1) {};
\node (f) at (-1,2) {};

\node (L1) at (-3,-2) {$1$};
\node (L2) at (-1,-2) {$2$};
\node (L3) at (0,3) {$3$};
\node (L4) at (0,1) {$4$};
\node (L5) at (-3,2) {$5$};
\node (L6) at (-5,0) {$6$};
\node (L7) at (-5,2) {$7$};

\draw (L1)--(b);
\draw (a)--(L5);

\draw (b)--(L2);
\draw (b)--(d);

\draw (d)--(e);
\draw (d)--(a);
\draw (a)--(f);


\draw (e)--(L6);
\draw (e)--(L7);

\draw (f)--(L3);
\draw (f)--(L4);
\end{tikzpicture}

\caption{
On the left a cladogram $X$. 
In the middle the cladogram $X'$ after the first half-step with the additional component in gray. We also already represent the randomly chosen edge where the leaf $\ell$ will be re-attached in the second half-step by making it thicker.
On the right the cladogram $X''$ obtained after the second half-step, which adds the (extended) leaf to the thick edge.}

\label{fig: illustration of one step of the cladogram chain}
\end{figure}

This defines a Markov chain $(X_t)_{t\geq 0} = (X^{(n)}_t)_{t\geq 0}$ on $\cC_n$ which is symmetric, irreducible, and aperiodic. Hence its unique stationary measure is the uniform
measure on cladograms, which we denote by $\pi_n$. Given $x,y \in \cC_n$ and an integer $t\geq 0$, we denote the $t$-step transition probabilities of the chain by $p_t(x,y) = \bbP(X_t = y \mid X_0 = x)$. In particular, $p_t(x, \cdot)$ is the distribution of the chain
starting at $x$ after $t$ steps.

\subsection{Main results}
 Before stating our result, we need to define the standard notions of distance to stationarity and mixing time, which we write directly  for our Markov chain $(X^{(n)}_t)_{t\geq 0}$ on the set $\cC_n$ of cladograms.

The worst-case total variation distance to  stationarity after $t$ steps is defined by
\begin{equation}\label{eq: def d n t}
   \textup{d}^{(n)}(t) := \max_{x\in \cC_n} \dtv(p_t(x, \cdot), \pi_n),
\end{equation}
where the total variation distance between two probability measures $\mu$ and $\nu$ on a finite set $S$ is given by $\dtv(\mu, \nu) = \max_{A\subseteq S}|\mu(A) - \nu(A)|$. The (total variation) mixing time 
is then defined by
\begin{equation}
    t_{\textup{mix}}^{(n)} = \min \ag t\geq 0 \mid  \textup{d}^{(n)}(t) \leq 1/4 \ad.
\end{equation}
We recall that the notation $a_n=\Theta(b_n)$ means that there exist constants $c>0$ and $C>0$ such that $c b_n \le a_n \le C b_n$ for all $n$ large enough. Our main result is the following theorem.
\begin{theorem}\label{thm: main mixing time cladograms}
   As $n\to \infty$, we have 
    \begin{equation}
        t_{\textup{mix}}^{(n)} = \Theta(n^2).
    \end{equation}
\end{theorem}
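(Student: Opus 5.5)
The lower bound $t_{\textup{mix}}^{(n)} \ge c n^2$ is already known. Aldous \cite{Aldous2000MixingCladograms} proved it, and it also follows from the relaxation time being of order $n^2$ \cite{Schweinsberg2002RelaxationTimeCladograms}, via the standard inequality $t_{\textup{mix}} \ge (t_{\textup{rel}}-1)\log 2$. So the work is the upper bound $t_{\textup{mix}}^{(n)} \le C n^2$. I will get it from a coupling $(X_t,Y_t)$ of two copies started from arbitrary cladograms $x,y$, and use $\textup{d}^{(n)}(t) \le \max_{x,y}\bbP(X_t\ne Y_t)$. The goal is $\bbP(X_{Cn^2}\ne Y_{Cn^2})\le 1/4$.

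\textbf{Coupling.} Both chains move the same leaf $\ell$ at every step, so labels match. To track agreement, I maintain a set of \emph{colored components}. These are disjoint sets of leaves, each of which spans the same subtree shape in $X_t$ and in $Y_t$ (a common rooted labelled subtree hanging off a single edge). Leaves outside every such subtree are uncolored.

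When $\ell$ is reinserted, I couple the uniform edge choices in the two trees. Edges inside a colored subtree of $X_t$ are matched bijectively with the corresponding edges in $Y_t$, and the same holds for the root edges of colored subtrees. This way, reinserting $\ell$ inside a component enlarges it identically in both trees. The remaining edges are matched arbitrarily. The counts agree because both trees have $2n-3$ edges, or $2n-5$ after removal.

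Removing a leaf from a component of size $k$ splits it into at most two pieces, and I keep both. Reinserting the leaf onto the root edge of a component, or next to an uncolored leaf, can merge it into a component or create a new one. I would initialize with every leaf as its own singleton component; the two trees coincide exactly when a single component contains all $n$ leaves.

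\textbf{Drift statistic.} Let $S_t=\sum_j k_j^2$, where $k_j$ are the component sizes, so $S_t\le n^2$ with equality iff $X_t=Y_t$. I would prove a drift estimate of the form
\[
\bbE[S_{t+1}-S_t\mid \mathcal F_t]\ \ge\ -\frac{c_1 S_t}{n}\cdot\frac{1}{n}+\frac{c_2}{n}\cdot(\text{growth term}),
\]
with increments of size $O(k)$. The comparison I want is this: each component behaves like the leaf count of a subtree under Aldous' chain, which is roughly a martingale with quadratic variation of order $k/n$ per step. Growth by insertion happens at rate about $2k/(2n)$, and loss happens at rate about $k/n$, plus an extra $O(1/n)$ for losses at the boundary. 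Summing, the squares should gain through variance roughly $\sum_j k_j/n \approx 1$ per step, plus merging gains. The boundary loss from splitting, of order $k_j/n$ for each $j$, must be compensated.

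Granting a drift $\bbE[\Delta S]\ge c>0$ up to $n^2$, together with bounded increments of order $n$ per step, a standard hitting-time argument gives the conclusion. One way is optional stopping applied to $S_t - ct$; another is a supermartingale comparison after absorbing at $S=n^2$. Either shows that $\bbE[\tau]\le n^2/c$ for the time $\tau$ at which $S$ reaches $n^2$, and Markov's inequality then yields $\bbP(\tau>4n^2/c)\le 1/4$.

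\textbf{Main obstacle.} The hardest step is the drift estimate. Splitting a component when an interior leaf is removed can destroy a lot of $S$, and the components are not independent: merges require adjacency in both trees simultaneously. I would first try to restrict the coupling so that removals near a component's root are matched across the two trees. If that fails, I would define components only as maximal common pendant subtrees, so that a removal destroys at most $O(k)$ of the square at rate $O(1/n)$. Then I would verify by a case analysis over where $\ell$ sits (interior, root edge, uncolored) that the expected loss is dominated by the variance gain, which gives a net drift of at least $c$ whenever $S_t<n^2$.
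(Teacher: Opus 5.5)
There is a genuine gap, and it sits exactly at the step you call the main obstacle. Your coupling and your potential $S_t=\sum_j k_j^2$ are the right objects: common pendant subtrees, with the edges inside each component and its root edge matched across the two trees and the remaining edges matched arbitrarily. But you never establish the drift estimate. You write a bound with an unspecified ``growth term'' and then grant it.

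The picture of the dynamics behind your worry is also wrong. With components defined as common rooted subtrees hanging off a single edge, removing a leaf $\ell$ from a component with leaf set $L$ never splits it. In both trees, deleting $\ell$ and suppressing its neighbour leaves the same rooted subtree on $L\setminus\{\ell\}$, still hanging off one edge (the merged edge, if that neighbour was the root vertex). Components never merge either, and every leaf always lies in some component. The only events are:
\begin{itemize}
\item the component with $F_i$ leaves loses a leaf with probability $F_i/n$;
\item it then receives the moved leaf with probability $(2F_i'-1)/(2n-5)$, its number of edges with the root edge included, where $F_i'\ge 1$ is its size after removal;
\item otherwise the leaf lands outside all components and starts a new singleton.
\end{itemize}
So the size vector is itself a Markov chain. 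A two-line computation gives, on the event that there are at least four components,
\[
\mathbb{E}\bigl[S_{t+1}-S_t \mid \mathcal F_t\bigr] = \frac{2n-4}{2n-5}+\frac{2S_t}{n(2n-5)} \ \ge\ 1 .
\]
This observation is the missing idea. None of your proposed fixes is needed: no restriction of the coupling, no maximality condition, no case analysis of splitting losses.

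Your terminal state is also wrong. A pendant subtree of an unrooted binary tree can never contain all $n$ leaves, since the other side of its root edge always contains a leaf. Hence $S_t=n^2$ is never attained, your hitting time $\tau$ is infinite, and a positive drift ``up to $n^2$'' cannot hold.

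The correct stopping time is the first time $T$ with exactly three components. At that time the part outside the components is a single internal vertex carrying the same three pendant subtrees in both trees, so $X_T=Y_T$ as non-plane trees. Three is also where the construction must stop. With three components, removing a singleton leaf attached to that central vertex would fuse the root edges of two different components, and the drift computation needs at least four.

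With $T$ in place of $\tau$, your optional-stopping step goes through: $\mathbb{P}(T>m)\le \mathbb{E}[S_{m\wedge T}-S_0]/m\le n^2/m$. Taking $m=4n^2$ gives $t_{\textup{mix}}^{(n)}\le 4n^2$. Your lower bound, via Aldous or via the relaxation time, is fine.
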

 In our case,
the lower bound $t_{\textup{mix}}^{(n)} \ge c n^2$ for some constant $c>0$ was established by Aldous in~\cite{Aldous2000MixingCladograms}. Our contribution here is the matching upper bound, up to a multiplicative constant.

\subsection{Proof strategy and discussion}
Our proof of Theorem~\ref{thm: main mixing time cladograms}
uses a lift of the Markov chain to some colored cladograms and a coupling between two copies of the 
lifted chain started from different initial conditions.
The idea is that the colors will record the parts that are similar in the two copies of the chain.
Couplings are a standard method in the mixing time literature; see, e.g., \cite[Chapter 5]{LivreLevinPeres2019MarkovChainsAndMixingTimesSecondEdition}.

Our coupling is an improvement of the one introduced by Aldous to find his $O(n^3)$ upper
bound. 
The main novelty lies in the analysis of the coupling. Aldous focused on the evolution of the size of a single arbitrary component. In \cite[Remark 4.2]{Aldous2000MixingCladograms}, he explains why this should not be optimal.
\begin{quote}
``Note that our section 3.3 argument is plainly inefficient in that (in the jargon of colors above) we track just one color at a time to see if it takes over or goes extinct; and in the latter case we repeat with another color. It would be better to track all colors simultaneously, but the usual analysis of the voter model (using duality with coalescing random walk, here Kingman's coalescent) seems hard to adapt to our setting.''
\end{quote}
Our improved version of the coupling tracks all colors simultaneously, and we realized that the analysis can be performed by following the evolution
of a single real-valued statistic: the sum of the squares of the color sizes. This statistic has drift at least 1 on average, and thus must reach a value of order $n^2$ after a time of order $n^2$. 
This allows us to prove that, after a time of order $n^2$,
we reach a configuration with only three colors,
in which case the two cladograms must be identical.
Then a standard coupling lemma implies that the mixing time is order at most $n^2$, proving Theorem~\ref{thm: main mixing time cladograms}.\medskip

We would like to mention here another potential approach
which we were not able to make work, and would potentially lead
 to find not only the order of magnitude of the mixing time as we do here, but also the limit of the distance to stationarity at time $cn^2$ for any fixed $c>0$ (often referred to as \textit{limiting profile} in the mixing time literature).

The idea is to consider a projection of Aldous' chain by erasing the leaf labels;
this gives a Markov chain on {\em unlabelled} cladograms.
Then the first and second half-steps of the construction
become independent (their only connection in Aldous chain is via the label of the removed leaf),
and this chain is what is sometimes called a down-up chain in the literature.
There has been a growing literature on such chains in recent years (starting from~\cite{fulman2009commutation,borodin2009diffusions,petrov2009diffusions-Kingman}),
and in particular, the eigenvalues and eigenvectors of such chains under general conditions have been described in \cite{fulman2009commutation,FerayRiveraLopez2025UpDownChainsAndScalingLimits}.
This applies to the unlabeled version of Aldous chain, and one should be able from there to prove that the mixing time of the {\em unlabeled} chain is of order $\Theta(n^2)$,
and to compute the limiting profile.

The difficult step that we could not make work is to go from the unlabeled chain to the labeled one.
Intuitively, each step of Aldous' walk mixes the labels
in a similar way as a random-to-random chain on permutations.
This chain is known to mix in $\Theta(n \log n)$ steps (see, e.g., \cite{DiaconisSaloff-Coste1993comparisongroups}),
i.e.~much quicker than the chain on unlabeled trees. Therefore, we expect that when the underlying tree structure is mixed, the labels have already been mixed for a long time. Hence, the labeled and unlabeled chains should have mixing times of the same order, and even the same limiting profile.
Unfortunately, the evolution of the labels 
is difficult to study and is highly dependent on the underlying tree structure, and we could not figure out how to make this argument rigorous.

\section{A coupling that follows colored components in parallel}\label{s: coupling}

We now define our coupling. 
We refer to \cite[Chapter 5]{LivreLevinPeres2019MarkovChainsAndMixingTimesSecondEdition} for more details on coupling methods for Markov chains.
Let $n\geq 4$. Let $x,y\in \cC_n$ be two cladograms of size $n$. In the coupling we color some parts of the cladograms. We will therefore enrich the structure and define the dynamics of the coupling on \textit{colored} cladograms.

\subsection{Colored cladograms}\label{s: signed partially colored cladograms} 

Let $n\geq 4$. A colored cladogram consists informally of an uncolored tree to which are attached monochromatic trees with labels on leaves, as illustrated in Figure~\ref{fig: cladogram example 20 colored}.

\definecolor{Aeighteencolor}{HTML}{FFD54F}
\definecolor{Asevencolor}{HTML}{7986CB}
\definecolor{Asinglecolor}{HTML}{F4A261}
\definecolor{Asecondcolor}{HTML}{81C784}
\definecolor{Bsinglecolor}{HTML}{B39DDB}
\definecolor{Bfirstcolor}{HTML}{F48FB1}
\definecolor{Bmiddlecolor}{HTML}{E57373}
\definecolor{Bsecondcolor}{HTML}{64B5F6}
\definecolor{Dcommoncolor}{HTML}{4DB6AC}

\begin{figure}[htbp]
\centering

\begin{tikzpicture}[
    scale=0.65,
    transform shape,
    every node/.style={
        circle,
        draw,
        minimum size=6mm,
        inner sep=0pt,
        font=\large
    },
    Aeighteen/.style={fill=Aeighteencolor},
    Aseven/.style={fill=Asevencolor},
    Asingle/.style={fill=Asinglecolor},
    Asecond/.style={fill=Asecondcolor},
    Bsingle/.style={fill=Bsinglecolor},
    Bfirst/.style={fill=Bfirstcolor},
    Bmiddle/.style={fill=Bmiddlecolor},
    Bsecond/.style={fill=Bsecondcolor},
    Dcommon/.style={fill=Dcommoncolor},
    thick
]

\def\radiusone{2.60}
\def\radiustwo{4.80}
\def\radiusthree{6.50}
\def\radiusfour{7.80}
\def\radiusfive{8.70}

\def\layoutrotation{-90}
\def\verticalcompression{0.6}

\newcommand{\radialposition}[2]{%
    ({#2*cos(#1+\layoutrotation)},%
     {\verticalcompression*#2*sin(#1+\layoutrotation)})%
}

\node (C) at (0,0) {};


\node (A0)
    at \radialposition{150}{\radiusone} {};

\node (A1)
    at \radialposition{190}{\radiustwo} {};
\node (A2)
    at \radialposition{130}{\radiustwo} {};

\node[Aeighteen] (L1)
    at \radialposition{200}{\radiusthree} {$18$};
\node[Aseven] (L2)
    at \radialposition{180}{\radiusthree} {$7$};
\node[Asingle] (L3)
    at \radialposition{160}{\radiusthree} {$16$};
\node[Asecond] (A3)
    at \radialposition{120}{\radiusthree} {};

\node[Asecond] (L4)
    at \radialposition{140}{\radiusfour} {$2$};
\node[Asecond] (A4)
    at \radialposition{110}{\radiusfour} {};

\node[Asecond] (L5)
    at \radialposition{120}{\radiusfive} {$5$};
\node[Asecond] (L6)
    at \radialposition{100}{\radiusfive} {$6$};


\node (B0)
    at \radialposition{30}{\radiusone} {};

\node[Bsingle] (L7)
    at \radialposition{80}{\radiustwo} {$15$};
\node (B1)
    at \radialposition{21.667}{\radiustwo} {};

\node[Bfirst] (B2)
    at \radialposition{46.667}{\radiusthree} {};
\node (B4)
    at \radialposition{-3.333}{\radiusthree} {};

\node[Bfirst] (L8)
    at \radialposition{63.333}{\radiusfour} {$13$};
\node[Bfirst] (B3)
    at \radialposition{38.333}{\radiusfour} {};
\node[Bmiddle] (L11)
    at \radialposition{13.333}{\radiusfour} {$1$};
\node[Bsecond] (B5)
    at \radialposition{-11.667}{\radiusfour} {};

\node[Bfirst] (L9)
    at \radialposition{46.667}{\radiusfive} {$14$};
\node[Bfirst] (L10)
    at \radialposition{30}{\radiusfive} {$11$};
\node[Bsecond] (L12)
    at \radialposition{-3.333}{\radiusfive} {$12$};
\node[Bsecond] (L13)
    at \radialposition{-20}{\radiusfive} {$19$};


\node[Dcommon] (D0)
    at \radialposition{270}{\radiusone} {};

\node[Dcommon] (D1)
    at \radialposition{311.667}{\radiustwo} {};
\node[Dcommon] (D2)
    at \radialposition{253.333}{\radiustwo} {};

\node[Dcommon] (L14)
    at \radialposition{320}{\radiusthree} {$20$};
\node[Dcommon] (L15)
    at \radialposition{303.333}{\radiusthree} {$4$};
\node[Dcommon] (L16)
    at \radialposition{286.667}{\radiusthree} {$3$};
\node[Dcommon] (D3)
    at \radialposition{245}{\radiusthree} {};

\node[Dcommon] (D4)
    at \radialposition{261.667}{\radiusfour} {};
\node[Dcommon] (D5)
    at \radialposition{228.333}{\radiusfour} {};

\node[Dcommon] (L17)
    at \radialposition{270}{\radiusfive} {$8$};
\node[Dcommon] (L18)
    at \radialposition{253.333}{\radiusfive} {$10$};
\node[Dcommon] (L19)
    at \radialposition{236.667}{\radiusfive} {$9$};
\node[Dcommon] (L20)
    at \radialposition{220}{\radiusfive} {$17$};


\foreach \u/\v in {
    C/A0,A0/A1,A0/A2,
    C/B0,B0/B1,B1/B4}
{
    \draw (\u)--(\v);
}

\draw[draw=Aeighteencolor] (A1)--(L1);

\draw[draw=Asevencolor] (A1)--(L2);

\draw[draw=Asinglecolor] (A2)--(L3);

\foreach \u/\v in {
    A2/A3,
    A3/L4,A3/A4,
    A4/L5,A4/L6}
{
    \draw[draw=Asecondcolor] (\u)--(\v);
}

\draw[draw=Bsinglecolor] (B0)--(L7);

\foreach \u/\v in {
    B1/B2,
    B2/L8,B2/B3,
    B3/L9,B3/L10}
{
    \draw[draw=Bfirstcolor] (\u)--(\v);
}

\draw[draw=Bmiddlecolor] (B4)--(L11);

\foreach \u/\v in {
    B4/B5,
    B5/L12,B5/L13}
{
    \draw[draw=Bsecondcolor] (\u)--(\v);
}

\foreach \u/\v in {
    C/D0,
    D0/D1,D0/D2,
    D1/L14,D1/L15,
    D2/L16,D2/D3,
    D3/D4,D3/D5,
    D4/L17,D4/L18,
    D5/L19,D5/L20}
{
    \draw[draw=Dcommoncolor] (\u)--(\v);
}

\end{tikzpicture}

\caption{A colored  cladogram with 20 leaves. The groups of leaves having the same color are {\footnotesize $\ag 16 \ad, \ag 7 \ad, \ag 18 \ad,  \ag 17, 9, 10, 8, 3, 4, 20 \ad, \ag 19, 12 \ad, \ag 1 \ad, \ag 11, 14, 13 \ad, \ag 15 \ad, \ag 6, 5, 2 \ad$}, and the vertices up to the common ancestor of these leaves take the same color, as well as all adjacent edges to these vertices.}
\label{fig: cladogram example 20 colored}

\end{figure}

More formally, a colored cladogram of size $n$ is a cladogram where each vertex and edge is given an additional label (on top of those from the leaves) in $\ag 0, \ldots, n\ad$, where we think of the label 0 as “uncolored” and the labels from $1$ to $n$ as colors. Moreover, we require that
\begin{itemize}
    \item all leaves of the cladogram have a color in $\ag 1, \ldots, n\ad$;
    \item for all $i \ge 0$ (note that $i=0$ is included), vertices and edges of color $i$ form a (possibly empty) connected component of the cladogram, that we denote by $\cP_i$;
    \item each nonempty colored component $\cP_i$ (for $i\geq 1$) contains a unique edge $\ag u, v\ad$ of color $i$ such that $u$ has color $i$ and $v$ has color 0,
and all other edges of $\cP_i$ have both extremities of color $i$.
\end{itemize}

We denote the set of all colored cladograms of size $n$ by $\widetilde{\cC}_n$.
Let $\AddColors \du \cC_n \to \widetilde{\cC}_n$ be the application which transforms a cladogram into the colored cladogram where each leaf $i$ and its incident edge are given the color $i$, and the other vertices and edges are given the color $0$.

\subsection{Colored steps}\label{s: colored steps}

As explained in Section~\ref{s: the model},  a step of Aldous' walk on cladograms consists in removing a leaf uniformly at random and reinserting it on a uniform random edge. 
We now lift this operation to define a Markov chain on colored cladograms. Again, a step of the Markov chain is divided into two half-steps.
This construction is illustrated in Figures~\ref{fig: a colored step} and~\ref{fig: another colored step}.
Let $X\in \widetilde{\cC}_n$ be a colored cladogram with at least 4 colors.

\definecolor{onecolor}{HTML}{F4A261}
\definecolor{twocolor}{HTML}{FFD54F}
\definecolor{fivecolor}{HTML}{64B5F6}
\definecolor{groupcolor}{HTML}{81C784}

\begin{figure}[ht]
\centering

\begin{tikzpicture}[
    scale=0.6,
    transform shape,
    every node/.style={
        circle,
        draw,
        minimum size=6mm,
        inner sep=0pt
    },
    one/.style={fill=onecolor},
    two/.style={fill=twocolor},
    five/.style={fill=fivecolor},
    group/.style={fill=groupcolor},
    thick
]

\node (a) at (-2,-1) {};
\node (b) at (-1,0) {};
\node[group] (d) at (-2,1) {};
\node[group] (e) at (-3,2) {};
\node[group] (f) at (-1,2) {};

\node[one] (L1) at (-1,-2) {$1$};
\node[two] (L2) at (0,-1) {$2$};
\node[group] (L3) at (0,3) {$3$};
\node[group] (L4) at (0,1) {$4$};
\node[five] (L5) at (-3,-2) {$5$};
\node[group] (L6) at (-4,1) {$6$};
\node[group] (L7) at (-4,3) {$7$};

\draw[draw=onecolor] (L1)--(a);
\draw[draw=fivecolor] (a)--(L5);
\draw[draw=twocolor] (b)--(L2);

\draw (a)--(b);

\foreach \u/\v in {
    b/d,
    d/e,d/f,
    e/L6,e/L7,
    f/L3,f/L4}
{
    \draw[draw=groupcolor] (\u)--(\v);
}

\end{tikzpicture}
\hspace{3em}
\begin{tikzpicture}[
    scale=0.6,
    transform shape,
    every node/.style={
        circle,
        draw,
        minimum size=6mm,
        inner sep=0pt
    },
    one/.style={fill=onecolor},
    two/.style={fill=twocolor},
    group/.style={fill=groupcolor},
    removed/.style={
        draw=gray,
        text=gray
    },
    thick
]

\node[removed] (a) at (-3,-1) {};
\node (b) at (-1,0) {};
\node[group] (d) at (-2,1) {};
\node[group] (e) at (-3,2) {};
\node[group] (f) at (-1,2) {};

\node[one] (L1) at (-2,-1) {$1$};
\node[two] (L2) at (0,-1) {$2$};
\node[group] (L3) at (0,3) {$3$};
\node[group] (L4) at (0,1) {$4$};
\node[removed] (L5) at (-4,-2) {$5$};
\node[group] (L6) at (-4,1) {$6$};
\node[group] (L7) at (-4,3) {$7$};

\draw[draw=onecolor] (L1)--(b);

\draw[gray] (a)--(L5);

\draw[draw=twocolor] (b)--(L2);

\draw[draw=groupcolor] (b)--(d);
\draw[draw=groupcolor] (d)--(e);

\draw[draw=groupcolor,ultra thick] (d)--(f);

\foreach \u/\v in {
    e/L6,e/L7,
    f/L3,f/L4}
{
    \draw[draw=groupcolor] (\u)--(\v);
}

\end{tikzpicture}
\hspace{3em}
\begin{tikzpicture}[
    scale=0.6,
    transform shape,
    every node/.style={
        circle,
        draw,
        minimum size=6mm,
        inner sep=0pt
    },
    one/.style={fill=onecolor},
    two/.style={fill=twocolor},
    group/.style={fill=groupcolor},
    thick
]

\node[group] (a) at (-2,1) {};
\node (b) at (-2,-1) {};
\node[group] (d) at (-3,0) {};
\node[group] (e) at (-4,1) {};
\node[group] (f) at (-1,2) {};

\node[one] (L1) at (-3,-2) {$1$};
\node[two] (L2) at (-1,-2) {$2$};
\node[group] (L3) at (0,3) {$3$};
\node[group] (L4) at (0,1) {$4$};

\node[group] (L5) at (-3,2) {$5$};

\node[group] (L6) at (-5,0) {$6$};
\node[group] (L7) at (-5,2) {$7$};

\draw[draw=onecolor] (L1)--(b);
\draw[draw=twocolor] (b)--(L2);

\draw[draw=groupcolor] (b)--(d);
\draw[draw=groupcolor] (d)--(e);

\draw[draw=groupcolor] (d)--(a);
\draw[draw=groupcolor] (a)--(L5);
\draw[draw=groupcolor] (a)--(f);

\foreach \u/\v in {
    e/L6,e/L7,
    f/L3,f/L4}
{
    \draw[draw=groupcolor] (\u)--(\v);
}

\end{tikzpicture}

\caption{
Illustration of a colored step.
}
\label{fig: a colored step}

\end{figure}

\definecolor{grouponecolor}{HTML}{F4A261}
\definecolor{groupsixcolor}{HTML}{81C784}
\definecolor{threecolor}{HTML}{64B5F6}
\definecolor{fourcolor}{HTML}{B39DDB}
\definecolor{newfivecolor}{HTML}{E57373}

\begin{figure}[ht]
\centering


\begin{tikzpicture}[
    scale=0.6,
    transform shape,
    every node/.style={
        circle,
        draw,
        minimum size=6mm,
        inner sep=0pt
    },
    groupone/.style={fill=grouponecolor},
    groupsix/.style={fill=groupsixcolor},
    three/.style={fill=threecolor},
    four/.style={fill=fourcolor},
    thick
]

\node[groupone] (a) at (-2,-1) {};
\node[groupone] (b) at (-1,0) {};
\node (d) at (-2,1) {};
\node[groupsix] (e) at (-3,2) {};
\node (f) at (-1,2) {};

\node[groupone] (L1) at (-1,-2) {$1$};
\node[groupone] (L2) at (0,-1) {$2$};
\node[three] (L3) at (0,3) {$3$};
\node[four] (L4) at (0,1) {$4$};
\node[groupone] (L5) at (-3,-2) {$5$};
\node[groupsix] (L6) at (-4,1) {$6$};
\node[groupsix] (L7) at (-4,3) {$7$};

\foreach \u/\v in {
    L1/a,a/L5,a/b,b/L2,b/d}
{
    \draw[draw=grouponecolor] (\u)--(\v);
}

\foreach \u/\v in {
    d/e,e/L6,e/L7}
{
    \draw[draw=groupsixcolor] (\u)--(\v);
}

\draw[draw=threecolor] (f)--(L3);
\draw[draw=fourcolor] (f)--(L4);

\draw (d)--(f);

\end{tikzpicture}%
\hspace{3em}%
\begin{tikzpicture}[
    scale=0.6,
    transform shape,
    every node/.style={
        circle,
        draw,
        minimum size=6mm,
        inner sep=0pt
    },
    groupone/.style={fill=grouponecolor},
    groupsix/.style={fill=groupsixcolor},
    three/.style={fill=threecolor},
    four/.style={fill=fourcolor},
    removed/.style={
        draw=gray,
        text=gray
    },
    thick
]

\node[removed] (a) at (-3,-1) {};
\node[groupone] (b) at (-1,0) {};
\node (d) at (-2,1) {};
\node[groupsix] (e) at (-3,2) {};
\node (f) at (-1,2) {};

\node[groupone] (L1) at (-2,-1) {$1$};
\node[groupone] (L2) at (0,-1) {$2$};
\node[three] (L3) at (0,3) {$3$};
\node[four] (L4) at (0,1) {$4$};
\node[removed] (L5) at (-4,-2) {$5$};
\node[groupsix] (L6) at (-4,1) {$6$};
\node[groupsix] (L7) at (-4,3) {$7$};

\draw[gray] (a)--(L5);

\foreach \u/\v in {
    L1/b,b/L2,b/d}
{
    \draw[draw=grouponecolor] (\u)--(\v);
}

\foreach \u/\v in {
    d/e,e/L6,e/L7}
{
    \draw[draw=groupsixcolor] (\u)--(\v);
}

\draw[draw=threecolor] (f)--(L3);
\draw[draw=fourcolor] (f)--(L4);

\draw[ultra thick] (d)--(f);

\end{tikzpicture}%
\hspace{3em}%
\begin{tikzpicture}[
    scale=0.6,
    transform shape,
    every node/.style={
        circle,
        draw,
        minimum size=6mm,
        inner sep=0pt
    },
    groupone/.style={fill=grouponecolor},
    groupsix/.style={fill=groupsixcolor},
    three/.style={fill=threecolor},
    four/.style={fill=fourcolor},
    newfive/.style={fill=newfivecolor},
    thick
]

\node (a) at (-2,1) {};
\node[groupone] (b) at (-2,-1) {};
\node (d) at (-3,0) {};
\node[groupsix] (e) at (-4,1) {};
\node (f) at (-1,2) {};

\node[groupone] (L1) at (-3,-2) {$1$};
\node[groupone] (L2) at (-1,-2) {$2$};
\node[three] (L3) at (0,3) {$3$};
\node[four] (L4) at (0,1) {$4$};

\node[newfive] (L5) at (-3,2) {$5$};

\node[groupsix] (L6) at (-5,0) {$6$};
\node[groupsix] (L7) at (-5,2) {$7$};

\foreach \u/\v in {
    L1/b,b/L2,b/d}
{
    \draw[draw=grouponecolor] (\u)--(\v);
}

\foreach \u/\v in {
    d/e,e/L6,e/L7}
{
    \draw[draw=groupsixcolor] (\u)--(\v);
}

\draw[draw=threecolor] (f)--(L3);
\draw[draw=fourcolor] (f)--(L4);

\draw[draw=newfivecolor] (a)--(L5);

\draw (d)--(a);
\draw (a)--(f);

\end{tikzpicture}

\caption{
Illustration of another colored step.
}
\label{fig: another colored step}

\end{figure}

\paragraph*{First half-step.} We first pick a uniformly random leaf $\ell$ in $X$. Denote the neighbour of $\ell$ by $u$, and the neighbours of $u$ by $u_1$ and $u_2$. Then we replace the two edges $\ag u,u_1 \ad$ and $\ag u, u_2 \ad$ by a single edge $\ag u_1, u_2\ad$, which takes the color of $\ag u,u_1 \ad$ or $\ag u, u_2 \ad$ if one of them is colored (by construction there can be one as in Figure~\ref{fig: a colored step} or two of the same color as in Figure~\ref{fig: another colored step}).
Importantly, since there are at least 4 colored components, it is not possible that the edges $\ag u,u_1 \ad$ and $\ag u, u_2 \ad$ have different colors $i,j \ge 1$. This gives a colored cladogram $X'$ with $n-1$ leaves, and an additional part consisting of $u, \ell$ and $\ag u, \ell \ad$.

\paragraph*{Second half-step.} We pick an edge $\ag v_1, v_2 \ad$ of $X'$ uniformly at random and, calling $i$ the color of the edge $\ag v_1, v_2\ad$, replace this edge by two edges $\ag v_1, u\ad$ and $\ag v_2, u\ad$, which both also have the color $i$.
We also give the following colors to the reinserted leaf:
\begin{itemize}
    \item if $i\geq 1$, then the vertex $u$, the leaf  $\ell$, and the edge $\ag u, \ell\ad$ all take  color $i$ (as in Figure~\ref{fig: a colored step});
    \item if $i=0$, then the vertex $u$ takes the color 0, but the leaf $\ell$ and the edge $\ag u, \ell \ad$ take an arbitrary unused color in $X'$ (as in Figure~\ref{fig: another colored step}), e.g.~$\min\{j:\, \cP_j(X') = \emptyset\}$ to define the step uniquely.
    \end{itemize}
This defines a colored cladogram $X''$ in $\widetilde{\cC}_n$ with at least $3$ colors.

\subsection{Definition of the multicolor coupling}\label{s: definition of the two-step coupling}
Let $n\geq 4$, $x,y\in \cC_n$, and set $X_0 = \AddColors(x)$ and $Y_0 = \AddColors(y)$.
In words, we start with colored cladograms where only leaves and their incident edges are colored,
with $n$ different colors.
We now define a coupled dynamics $(X_t)_{t\geq 0}$ and $(Y_t)_{t\geq 0}$,
such that both $X_t$ and $Y_t$ individually are Markov chains with transitions as described in Section~\ref{s: colored steps}
and initial conditions $X_0$ and $Y_0$, respectively.

\medskip

The coupling which we will construct has the following invariant. For every color $i\in [n] = \ag 1, \ldots, n\ad$ (we emphasize that $i=0$ is excluded here) and every $t\geq 0$, the colored components $\cP_i(X_t)$ and $\cP_i(Y_t)$ are identical, including the labels on their leaves. This holds by construction for $t=0$ (for which both $\cP_i(X_t)$ and $\cP_i(Y_t)$ consist of the leaf labeled $i$ and its incident edge), and it will be easy to see that this property is preserved along our construction.

\medskip

\definecolor{leafonecolor}{HTML}{F4A261}
\definecolor{leaftwocolor}{HTML}{FFD54F}
\definecolor{leafthreecolor}{HTML}{81C784}
\definecolor{leaffourcolor}{HTML}{4DB6AC}
\definecolor{leaffivecolor}{HTML}{64B5F6}
\definecolor{leafsixcolor}{HTML}{B39DDB}

\tikzset{
    sixcladogram/.style={
        scale=0.42,
        transform shape,
        every node/.style={
            circle,
            draw,
            minimum size=7mm,
            inner sep=0pt,
            font=\large
        },
        leafone/.style={fill=leafonecolor},
        leaftwo/.style={fill=leaftwocolor},
        leafthree/.style={fill=leafthreecolor},
        leaffour/.style={fill=leaffourcolor},
        leaffive/.style={fill=leaffivecolor},
        leafsix/.style={fill=leafsixcolor},
        thick
    }
}

\def\edgelen{1.8}

\begin{figure}[htbp]
\centering

\begin{tabular}{@{}c@{\hspace{6em}}c@{}}


\(X_0=\;\) \begin{tikzpicture}[     sixcladogram,     baseline=(current bounding box.center) ]

\node (v1) at (0,0) {};
\node (v2) at ([shift={(10:\edgelen)}]v1) {};
\node (v3) at ([shift={(340:\edgelen)}]v2) {};
\node (v4) at ([shift={(20:\edgelen)}]v3) {};

\node[leafone] (L1)
    at ([shift={(140:\edgelen)}]v1) {$1$};

\node[leaftwo] (L2)
    at ([shift={(235:\edgelen)}]v1) {$2$};

\node[leafthree] (L3)
    at ([shift={(105:\edgelen)}]v2) {$3$};

\node[leaffour] (L4)
    at ([shift={(280:\edgelen)}]v3) {$4$};

\node[leaffive] (L5)
    at ([shift={(60:\edgelen)}]v4) {$5$};

\node[leafsix] (L6)
    at ([shift={(320:\edgelen)}]v4) {$6$};

\draw (v1)--(v2);
\draw (v2)--(v3);
\draw (v3)--(v4);

\draw[draw=leafonecolor]   (v1)--(L1);
\draw[draw=leaftwocolor]   (v1)--(L2);
\draw[draw=leafthreecolor] (v2)--(L3);
\draw[draw=leaffourcolor]  (v3)--(L4);
\draw[draw=leaffivecolor]  (v4)--(L5);
\draw[draw=leafsixcolor]   (v4)--(L6);

\end{tikzpicture}
&

\(Y_0=\;\) \begin{tikzpicture}[     sixcladogram,     baseline=(current bounding box.center) ]

\node (c) at (0,0) {};

\node (v1) at ([shift={(170:\edgelen)}]c) {};
\node (v2) at ([shift={(50:\edgelen)}]c) {};
\node (v3) at ([shift={(290:\edgelen)}]c) {};

\node[leafone] (L1)
    at ([shift={(110:\edgelen)}]v1) {$1$};

\node[leaffour] (L4)
    at ([shift={(230:\edgelen)}]v1) {$4$};

\node[leafsix] (L6)
    at ([shift={(140:\edgelen)}]v2) {$6$};

\node[leaftwo] (L2)
    at ([shift={(350:\edgelen)}]v2) {$2$};

\node[leaffive] (L5)
    at ([shift={(350:\edgelen)}]v3) {$5$};

\node[leafthree] (L3)
    at ([shift={(200:\edgelen)}]v3) {$3$};

\draw (c)--(v1);
\draw (c)--(v2);
\draw (c)--(v3);

\draw[draw=leafonecolor]   (v1)--(L1);
\draw[draw=leaffourcolor]  (v1)--(L4);
\draw[draw=leafsixcolor]   (v2)--(L6);
\draw[draw=leaftwocolor]   (v2)--(L2);
\draw[draw=leaffivecolor]  (v3)--(L5);
\draw[draw=leafthreecolor] (v3)--(L3);

\end{tikzpicture}
\\[15mm]


\(X_1=\;\) \begin{tikzpicture}[     sixcladogram,     baseline=(current bounding box.center) ]

\node (v2) at (0,0) {};
\node (v3) at ([shift={(0:\edgelen)}]v2) {};
\node (v4) at ([shift={(20:\edgelen)}]v3) {};

\node[leaffour] (v1)
    at ([shift={(285:\edgelen)}]v3) {};

\node[leaftwo] (L2)
    at ([shift={(220:\edgelen)}]v2) {$2$};

\node[leafthree] (L3)
    at ([shift={(110:\edgelen)}]v2) {$3$};

\node[leaffive] (L5)
    at ([shift={(70:\edgelen)}]v4) {$5$};

\node[leafsix] (L6)
    at ([shift={(330:\edgelen)}]v4) {$6$};

\node[leaffour] (L4)
    at ([shift={(325:\edgelen)}]v1) {$4$};

\node[leaffour] (L1)
    at ([shift={(210:\edgelen)}]v1) {$1$};

\draw (v2)--(v3);
\draw (v3)--(v4);

\draw[draw=leaftwocolor]   (v2)--(L2);
\draw[draw=leafthreecolor] (v2)--(L3);
\draw[draw=leaffivecolor]  (v4)--(L5);
\draw[draw=leafsixcolor]   (v4)--(L6);
\draw[draw=leaffourcolor]  (v3)--(v1);
\draw[draw=leaffourcolor]  (v1)--(L4);
\draw[draw=leaffourcolor]  (v1)--(L1);

\end{tikzpicture}
&

\(Y_1=\;\) \begin{tikzpicture}[     sixcladogram,     baseline=(current bounding box.center) ]

\node (c) at (0,0) {};

\node[leaffour] (v1)
    at ([shift={(170:\edgelen)}]c) {};

\node (v2)
    at ([shift={(50:\edgelen)}]c) {};

\node (v3)
    at ([shift={(290:\edgelen)}]c) {};

\node[leaffour] (L1)
    at ([shift={(110:\edgelen)}]v1) {$1$};

\node[leaffour] (L4)
    at ([shift={(230:\edgelen)}]v1) {$4$};

\node[leafsix] (L6)
    at ([shift={(140:\edgelen)}]v2) {$6$};

\node[leaftwo] (L2)
    at ([shift={(350:\edgelen)}]v2) {$2$};

\node[leaffive] (L5)
    at ([shift={(350:\edgelen)}]v3) {$5$};

\node[leafthree] (L3)
    at ([shift={(200:\edgelen)}]v3) {$3$};

\draw[draw=leaffourcolor] (c)--(v1);
\draw (c)--(v2);
\draw (c)--(v3);

\draw[draw=leaffourcolor]  (v1)--(L1);
\draw[draw=leaffourcolor]  (v1)--(L4);
\draw[draw=leafsixcolor]   (v2)--(L6);
\draw[draw=leaftwocolor]   (v2)--(L2);
\draw[draw=leaffivecolor]  (v3)--(L5);
\draw[draw=leafthreecolor] (v3)--(L3);

\end{tikzpicture}
\\[15mm]


\(X_2=\;\) \begin{tikzpicture}[     sixcladogram,     baseline=(current bounding box.center) ]

\node (v3) at (0,0) {};
\node (v4) at ([shift={(20:\edgelen)}]v3) {};

\node[leaffour] (v1)
    at ([shift={(180:\edgelen)}]v3) {};

\node[leaffour] (v2)
    at ([shift={(200:\edgelen)}]v1) {};

\node[leaffour] (L4)
    at ([shift={(200:\edgelen)}]v2) {$4$};

\node[leaffour] (L1)
    at ([shift={(140:\edgelen)}]v1) {$1$};

\node[leaffour] (L2)
    at ([shift={(140:\edgelen)}]v2) {$2$};

\node[leafthree] (L3)
    at ([shift={(110:\edgelen)}]v3) {$3$};

\node[leaffive] (L5)
    at ([shift={(70:\edgelen)}]v4) {$5$};

\node[leafsix] (L6)
    at ([shift={(330:\edgelen)}]v4) {$6$};

\draw (v3)--(v4);

\draw[draw=leafthreecolor] (v3)--(L3);
\draw[draw=leaffivecolor]  (v4)--(L5);
\draw[draw=leafsixcolor]   (v4)--(L6);

\draw[draw=leaffourcolor] (v3)--(v1);
\draw[draw=leaffourcolor] (v1)--(L1);
\draw[draw=leaffourcolor] (v1)--(v2);
\draw[draw=leaffourcolor] (v2)--(L2);
\draw[draw=leaffourcolor] (v2)--(L4);

\end{tikzpicture}
&

\(Y_2=\;\) \begin{tikzpicture}[     sixcladogram,     baseline=(current bounding box.center) ]

\node (c) at (0,0) {};

\node[leaffour] (v1)
    at ([shift={(180:\edgelen)}]c) {};

\node[leaffour] (v2)
    at ([shift={(200:\edgelen)}]v1) {};

\node[leaffour] (L1)
    at ([shift={(90:\edgelen)}]v1) {$1$};

\node[leaffour] (L4)
    at ([shift={(120:\edgelen)}]v2) {$4$};

\node[leaffour] (L2)
    at ([shift={(200:\edgelen)}]v2) {$2$};

\node (v3)
    at ([shift={(340:\edgelen)}]c) {};

\node[leaffive] (L5)
    at ([shift={(40:\edgelen)}]v3) {$5$};

\node[leafthree] (L3)
    at ([shift={(310:\edgelen)}]v3) {$3$};

\node[leafsix] (L6)
    at ([shift={(70:\edgelen)}]c) {$6$};

\draw[draw=leaffourcolor] (c)--(v1);
\draw[draw=leaffourcolor] (v1)--(L1);
\draw[draw=leaffourcolor] (v1)--(v2);
\draw[draw=leaffourcolor] (v2)--(L2);
\draw[draw=leaffourcolor] (v2)--(L4);

\draw (c)--(v3);
\draw[draw=leaffivecolor]  (v3)--(L5);
\draw[draw=leafthreecolor] (v3)--(L3);
\draw[draw=leafsixcolor]   (c)--(L6);

\end{tikzpicture}
\\[15mm]


\(X_3=\;\) \begin{tikzpicture}[     sixcladogram,     baseline=(current bounding box.center) ]

\node (v3) at (0,0) {};
\node (v4) at ([shift={(350:\edgelen)}]v3) {};

\node[leaffour] (v1)
    at ([shift={(200:\edgelen)}]v3) {};

\node[leaffour] (L1)
    at ([shift={(140:\edgelen)}]v1) {$1$};

\node[leaffour] (L4)
    at ([shift={(200:\edgelen)}]v1) {$4$};

\node[leafthree] (L3)
    at ([shift={(130:\edgelen)}]v3) {$3$};

\node[leaffive] (v2)
    at ([shift={(50:\edgelen)}]v4) {};

\node[leaffive] (L5)
    at ([shift={(50:\edgelen)}]v2) {$5$};

\node[leaffive] (L2)
    at ([shift={(320:\edgelen)}]v2) {$2$};

\node[leafsix] (L6)
    at ([shift={(310:\edgelen)}]v4) {$6$};

\draw (v3)--(v4);

\draw[draw=leafthreecolor] (v3)--(L3);
\draw[draw=leafsixcolor]   (v4)--(L6);

\draw[draw=leaffourcolor] (v3)--(v1);
\draw[draw=leaffourcolor] (v1)--(L1);
\draw[draw=leaffourcolor] (v1)--(L4);

\draw[draw=leaffivecolor] (v4)--(v2);
\draw[draw=leaffivecolor] (v2)--(L5);
\draw[draw=leaffivecolor] (v2)--(L2);

\end{tikzpicture}
&

\(Y_3=\;\) \begin{tikzpicture}[     sixcladogram,     baseline=(current bounding box.center) ]

\node (c) at (0,0) {};

\node[leaffour] (v1)
    at ([shift={(180:\edgelen)}]c) {};

\node[leaffour] (L1)
    at ([shift={(120:\edgelen)}]v1) {$1$};

\node[leaffour] (L4)
    at ([shift={(240:\edgelen)}]v1) {$4$};

\node (v3)
    at ([shift={(340:\edgelen)}]c) {};

\node[leafthree] (L3)
    at ([shift={(270:\edgelen)}]v3) {$3$};

\node[leaffive] (v2)
    at ([shift={(10:\edgelen)}]v3) {};

\node[leaffive] (L5)
    at ([shift={(10:\edgelen)}]v2) {$5$};

\node[leaffive] (L2)
    at ([shift={(300:\edgelen)}]v2) {$2$};

\node[leafsix] (L6)
    at ([shift={(65:\edgelen)}]c) {$6$};

\draw[draw=leaffourcolor] (c)--(v1);
\draw[draw=leaffourcolor] (v1)--(L1);
\draw[draw=leaffourcolor] (v1)--(L4);

\draw (c)--(v3);
\draw[draw=leafthreecolor] (v3)--(L3);

\draw[draw=leaffivecolor] (v3)--(v2);
\draw[draw=leaffivecolor] (v2)--(L5);
\draw[draw=leaffivecolor] (v2)--(L2);

\draw[draw=leafsixcolor] (c)--(L6);

\end{tikzpicture}
\\[15mm]


\(X_4=\;\) \begin{tikzpicture}[     sixcladogram,     baseline=(current bounding box.center) ]

\node (v3) at (0,0) {};

\node (v2)
    at ([shift={(0:\edgelen)}]v3) {};

\node (v4)
    at ([shift={(10:\edgelen)}]v2) {};

\node[leaffour] (v1)
    at ([shift={(200:\edgelen)}]v3) {};

\node[leaffour] (L1)
    at ([shift={(140:\edgelen)}]v1) {$1$};

\node[leaffour] (L4)
    at ([shift={(200:\edgelen)}]v1) {$4$};

\node[leafthree] (L3)
    at ([shift={(130:\edgelen)}]v3) {$3$};

\node[leaffive] (L2)
    at ([shift={(50:\edgelen)}]v4) {$2$};

\node[leafsix] (L6)
    at ([shift={(310:\edgelen)}]v4) {$6$};

\node[leafone] (L5)
    at ([shift={(100:\edgelen)}]v2) {$5$};

\draw (v3)--(v2);
\draw (v2)--(v4);

\draw[draw=leafthreecolor] (v3)--(L3);
\draw[draw=leafsixcolor]   (v4)--(L6);
\draw[draw=leaffivecolor]  (v4)--(L2);

\draw[draw=leaffourcolor] (v3)--(v1);
\draw[draw=leaffourcolor] (v1)--(L1);
\draw[draw=leaffourcolor] (v1)--(L4);

\draw[draw=leafonecolor] (v2)--(L5);

\end{tikzpicture}
&

\(Y_4=\;\) \begin{tikzpicture}[     sixcladogram,     baseline=(current bounding box.center) ]

\node (c) at (0,0) {};

\node (v2)
    at ([shift={(20:\edgelen)}]c) {};

\node (v3)
    at ([shift={(0:\edgelen)}]v2) {};

\node[leaffour] (v1)
    at ([shift={(180:\edgelen)}]c) {};

\node[leaffour] (L1)
    at ([shift={(130:\edgelen)}]v1) {$1$};

\node[leaffour] (L4)
    at ([shift={(220:\edgelen)}]v1) {$4$};

\node[leafsix] (L6)
    at ([shift={(90:\edgelen)}]c) {$6$};

\node[leaffive] (L2)
    at ([shift={(300:\edgelen)}]v3) {$2$};

\node[leafthree] (L3)
    at ([shift={(40:\edgelen)}]v3) {$3$};

\node[leafone] (L5)
    at ([shift={(270:\edgelen)}]v2) {$5$};

\draw (c)--(v2);
\draw (v2)--(v3);

\draw[draw=leaffourcolor] (c)--(v1);
\draw[draw=leaffourcolor] (v1)--(L1);
\draw[draw=leaffourcolor] (v1)--(L4);

\draw[draw=leafsixcolor]   (c)--(L6);
\draw[draw=leaffivecolor]  (v3)--(L2);
\draw[draw=leafthreecolor] (v3)--(L3);
\draw[draw=leafonecolor]   (v2)--(L5);

\end{tikzpicture}
\\[15mm]


\(X_5=\;\) \begin{tikzpicture}[     sixcladogram,     baseline=(current bounding box.center) ]

\node (v3) at (0,0) {};

\node (v2)
    at ([shift={(-10:\edgelen)}]v3) {};

\node[leaffour] (v1)
    at ([shift={(200:\edgelen)}]v3) {};

\node[leaffour] (L1)
    at ([shift={(140:\edgelen)}]v1) {$1$};

\node[leaffour] (L4)
    at ([shift={(200:\edgelen)}]v1) {$4$};
\node[leafthree] (L3)
    at ([shift={(130:\edgelen)}]v3) {$3$};

\node[leaffive] (L2)
    at ([shift={(310:\edgelen)}]v2) {$2$};

\node[leafone] (v4)
    at ([shift={(20:\edgelen)}]v2) {};

\node[leafone] (L5)
    at ([shift={(50:\edgelen)}]v4) {$5$};

\node[leafone] (L6)
    at ([shift={(-40:\edgelen)}]v4) {$6$};

\draw (v3)--(v2);

\draw[draw=leafthreecolor] (v3)--(L3);
\draw[draw=leaffivecolor]  (v2)--(L2);

\draw[draw=leaffourcolor] (v3)--(v1);
\draw[draw=leaffourcolor] (v1)--(L1);
\draw[draw=leaffourcolor] (v1)--(L4);

\draw[draw=leafonecolor] (v2)--(v4);
\draw[draw=leafonecolor] (v4)--(L5);
\draw[draw=leafonecolor] (v4)--(L6);

\end{tikzpicture}
&

\(Y_5=\;\) \begin{tikzpicture}[     sixcladogram,     baseline=(current bounding box.center) ]

\node (v2) at (0,0) {};

\node (v3)
    at ([shift={(20:\edgelen)}]v2) {};

\node[leaffour] (v1)
    at ([shift={(180:\edgelen)}]v2) {};

\node[leaffour] (L1)
    at ([shift={(130:\edgelen)}]v1) {$1$};

\node[leaffour] (L4)
    at ([shift={(220:\edgelen)}]v1) {$4$};

\node[leaffive] (L2)
    at ([shift={(300:\edgelen)}]v3) {$2$};

\node[leafthree] (L3)
    at ([shift={(40:\edgelen)}]v3) {$3$};

\node[leafone] (c)
    at ([shift={(270:\edgelen)}]v2) {};

\node[leafone] (L5)
    at ([shift={(200:\edgelen)}]c) {$5$};

\node[leafone] (L6)
    at ([shift={(340:\edgelen)}]c) {$6$};

\draw (v2)--(v3);

\draw[draw=leaffourcolor] (v2)--(v1);
\draw[draw=leaffourcolor] (v1)--(L1);
\draw[draw=leaffourcolor] (v1)--(L4);

\draw[draw=leaffivecolor]  (v3)--(L2);
\draw[draw=leafthreecolor] (v3)--(L3);

\draw[draw=leafonecolor] (v2)--(c);
\draw[draw=leafonecolor] (c)--(L5);
\draw[draw=leafonecolor] (c)--(L6);

\end{tikzpicture}
\\[15mm]


\(X_6=\;\) \begin{tikzpicture}[     sixcladogram,     baseline=(current bounding box.center) ]

\node (v2) at (0,0) {};

\node[leaffour] (v1)
    at ([shift={(200:\edgelen)}]v2) {};

\node[leaffour] (L1)
    at ([shift={(140:\edgelen)}]v1) {$1$};

\node[leaffour] (L4)
    at ([shift={(200:\edgelen)}]v1) {$4$};

\node[leaffive] (L2)
    at ([shift={(310:\edgelen)}]v2) {$2$};

\node[leafone] (v3)
    at ([shift={(50:\edgelen)}]v2) {};

\node[leafone] (v4)
    at ([shift={(20:\edgelen)}]v3) {};

\node[leafone] (L3)
    at ([shift={(140:\edgelen)}]v3) {$3$};

\node[leafone] (L5)
    at ([shift={(50:\edgelen)}]v4) {$5$};

\node[leafone] (L6)
    at ([shift={(-40:\edgelen)}]v4) {$6$};

\draw[draw=leaffourcolor] (v2)--(v1);
\draw[draw=leaffourcolor] (v1)--(L1);
\draw[draw=leaffourcolor] (v1)--(L4);

\draw[draw=leaffivecolor] (v2)--(L2);

\draw[draw=leafonecolor] (v2)--(v3);
\draw[draw=leafonecolor] (v3)--(v4);
\draw[draw=leafonecolor] (v3)--(L3);
\draw[draw=leafonecolor] (v4)--(L5);
\draw[draw=leafonecolor] (v4)--(L6);

\end{tikzpicture}
&

\(Y_6=\;\) \begin{tikzpicture}[     sixcladogram,     baseline=(current bounding box.center) ]

\node (v2) at (0,0) {};

\node[leaffour] (v1)
    at ([shift={(180:\edgelen)}]v2) {};

\node[leaffour] (L1)
    at ([shift={(130:\edgelen)}]v1) {$1$};

\node[leaffour] (L4)
    at ([shift={(220:\edgelen)}]v1) {$4$};

\node[leaffive] (L2)
    at ([shift={(80:\edgelen)}]v2) {$2$};

\node[leafone] (v3)
    at ([shift={(340:\edgelen)}]v2) {};

\node[leafone] (c)
    at ([shift={(20:\edgelen)}]v3) {};

\node[leafone] (L3)
    at ([shift={(220:\edgelen)}]v3) {$3$};

\node[leafone] (L5)
    at ([shift={(330:\edgelen)}]c) {$5$};

\node[leafone] (L6)
    at ([shift={(60:\edgelen)}]c) {$6$};

\draw[draw=leaffourcolor] (v2)--(v1);
\draw[draw=leaffourcolor] (v1)--(L1);
\draw[draw=leaffourcolor] (v1)--(L4);

\draw[draw=leaffivecolor] (v2)--(L2);

\draw[draw=leafonecolor] (v2)--(v3);
\draw[draw=leafonecolor] (v3)--(c);
\draw[draw=leafonecolor] (v3)--(L3);
\draw[draw=leafonecolor] (c)--(L5);
\draw[draw=leafonecolor] (c)--(L6);

\end{tikzpicture}

\end{tabular}

\caption{Some possible coupled colored steps in the coupling. In step $1$, the leaf $1$ is moved close to $4$ in both trees, and gets the same color as $1$. In step $4$, the leaf $5$ is moved in both trees to an uncolored edge and gets a new color.
In step $6$, the leaf $3$ is moved to an orange edge, and the number of colors goes down to $3$. Hence, $T=6$ here, and the trees $X_6$ and $Y_6$ are identical (as nonplane trees).}
\label{fig: coupling illustration}

\end{figure}

We now describe the dynamics of the coupling, which we illustrate in Figure~\ref{fig: coupling illustration}. 
As long as $X_t$ (and hence also $Y_t$) has at least 4 colored components,
we apply a colored step to both $X_t$ and $Y_t$ as follows.
In the first half-step, we pick a single label uniformly at random in $\{1,\dots,n\}$,
and we remove the corresponding leaf (with its incident edge) from $X_t$ and $Y_t$;
then in the second half-step, we reinsert the removed leaf on a uniform random edge $ \ag v_1, v_2 \ad$ of $X_t$ and, 
\begin{itemize}
    \item if $ \ag v_1, v_2 \ad$ has a color $i>0$, then 
    we reinsert the removed leaf (and its incident component) in $Y_t'$ \textbf{on the same edge} of the same colored component as in $X_t'$
    (since $\cP_i(X_t')=\cP_i(Y_t')$, we can identify their edges, and reinserting it on the same edge in $Y_t'$
    as in $X_t'$ makes sense);
    \item if $ \ag v_1, v_2 \ad$ is uncolored, then we choose an uncolored edge of $Y_t'$ uniformly at random,
    and we reinsert the removed leaf on it.
\end{itemize}
This gives $X_{t+1}$ and $Y_{t+1}$, and in both cases, our invariant
is preserved, i.e.~$\cP_i(X_{t+1})=\cP_i(Y_{t+1})$ for all $i>0$.\medskip

Let us define the stopping time
\begin{equation}
    T = \min\ag t\geq 0 \mid X_t \text{ has } 3 \text{ colored components}\ad.
\end{equation}
The crucial point is that $X_T = Y_T$. Indeed, both $X_T$ and $Y_T$ consist of the same three colored components  attached to a single uncolored vertex. Since cladograms are not embedded in the plane, we have $X_T=Y_T$.
Therefore, for $t \ge T$, we can forget colors and
apply the dynamics of Section~\ref{s: the model} to $X_t$ and $Y_t$ simultaneously, preserving the equality $X_t=Y_t$.

\medskip

Then, from standard coupling arguments -- see, e.g., \cite[Corollary 5.3]{LivreLevinPeres2019MarkovChainsAndMixingTimesSecondEdition} --, one has the following relation between
the coupling time $T$ and the worst case total variation distance $ d^{(n)}(t)$ defined in~Eq.~\eqref{eq: def d n t}.
\begin{lemma}\label{lem:dtv-tailT}
    $\displaystyle d^{(n)}(t) \le \sup_{x,y \in \cC_n} \bbP \big[ T >t \big| X_0 = \AddColors(x), \, Y_0 = \AddColors(y)\big]$.
\end{lemma}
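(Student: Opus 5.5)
The plan is to run the standard coupling inequality, in the form of \cite[Corollary 5.3]{LivreLevinPeres2019MarkovChainsAndMixingTimesSecondEdition}. The one point needing care is that the coupled processes are colored chains, so we first check that forgetting colors gives two copies of Aldous' chain. The steps are as follows.

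\emph{Marginals.} Let $F\du \widetilde{\cC}_n\to\cC_n$ be the map that erases colors. Fix $x,y$ and run the coupling from $X_0=\AddColors(x)$ and $Y_0=\AddColors(y)$. We claim that $(F(X_t))_{t\ge0}$ is a Markov chain with the transitions of Section~\ref{s: the model}, started from $x$, and likewise that $(F(Y_t))$ is Aldous' chain started from $y$.
\begin{itemize}
\item Before $T$, the removed label is uniform in $[n]$ in both copies. This is the first half-step.
\item For $X$, the reinsertion edge is uniform among the edges of $X_t'$. For the uncolored structure this is exactly the second half-step.
\item For $Y$, the edge is chosen in two stages. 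With probability equal to the fraction of colored edges, a colored edge is picked uniformly, and it is identified with an edge of $Y_t'$. Otherwise a uniform uncolored edge of $Y_t'$ is picked. The sets of colored and uncolored edges of $X_t'$ and $Y_t'$ have the same sizes, since $\cP_i(X_t')=\cP_i(Y_t')$ for all $i>0$ and both trees have $2n-5$ edges. Hence the edge chosen in $Y_t'$ is uniform on all its edges.
\item The colors only decide how the step is described; they do not affect the law of the uncolored tree. So conditionally on the whole past, $F(Y_{t+1})$ has law $p_1(F(Y_t),\cdot)$.
\item After $T$ the chains run Aldous' dynamics directly, with equal moves. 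Since $T$ is a stopping time, the strong Markov property keeps the marginals correct.
\end{itemize}

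\emph{Coupling inequality.} We have $F(X_t)=F(Y_t)$ for all $t\ge T$, because $X_T=Y_T$ as noted above. Therefore
\[
\dtv(p_t(x,\cdot),p_t(y,\cdot))\le \bbP(F(X_t)\neq F(Y_t))\le \bbP(T>t).
\]
Next, stationarity gives $\pi_n=\sum_y \pi_n(y)p_t(y,\cdot)$, and total variation distance is convex. Hence
\[
\dtv(p_t(x,\cdot),\pi_n)\le \max_y \dtv(p_t(x,\cdot),p_t(y,\cdot)).
\]
Taking the maximum over $x$ yields the lemma.

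\emph{Main obstacle.} The only nonroutine point is the marginal check for $Y$: that the two-stage choice of edge in $Y_t'$ is uniform. This is what the invariant $\cP_i(X_t')=\cP_i(Y_t')$ guarantees.
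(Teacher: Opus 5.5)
Your proposal is correct and follows the paper's approach. The paper simply invokes the standard coupling inequality \cite[Corollary 5.3]{LivreLevinPeres2019MarkovChainsAndMixingTimesSecondEdition}; you unfold it, including the check the paper leaves implicit: after erasing colors, both coordinates are copies of Aldous' chain, because the invariant $\cP_i(X_t')=\cP_i(Y_t')$ gives equal numbers of colored and uncolored edges, so the reinsertion edge in $Y_t'$ is uniform. (Two minor points: no strong Markov property is needed after $T$, since the one-step conditional law is $p_1$ on both $\mathcal F_t$-measurable events $\{T>t\}$ and $\{T\le t\}$; and your letter $F$ for the color-erasing map clashes with the paper's $F_i$.)
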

In the next section,
we prove that the distribution of the stopping time does not depend on the initial cladograms $x$ and $y$,
and provide some bounds on its tail probabilities.

\section{Analysis of the coupling time}

\subsection{Projection on component sizes}
Recall that, for a colored cladogram $X$, we denote by $\cP_i(X)$ its component of color $i$.
We also let $F_i(X)$ be the number of leaves of color $i$ in $X$. 
The number of interior vertices and edges of color $i$ in $X$ are then $F_i(X)-1$ and $2F_i(X)-1$, given that $F_i(X)>0$.

Let $(X_t)_{t \ge 0}$ be the Markov chain on colored cladograms defined in Section~\ref{s: colored steps}.
Then it is easy to see that the vector $(F_1(X_t),\dots,F_n(X_t))_{t \ge 0}$ is also a Markov chain with
initial condition $(1,\dots,1)$ and transitions consisting in
\begin{itemize}
    \item a first half-step
    changing $(F_1,\dots,F_n)$ to $(F'_1,\dots,F'_n)\coloneq (F_1,\dots,F_i-1,\dots,F_n)$ with probability $F_i/n$ (for $1\le i \le n$);
    \item then a second half-step changing $(F'_1,\dots,F'_n)$ to 
    $(F'_1,\dots,F'_j+1,\dots,F'_n)$ with probability $(2F'_j-1)/(2n-5)$
    and changing the first $0$ of $(F'_1,\dots,F'_n)$ into a 1 with probability $(C'-3)/(2n-5)$, where $C' = |\{j\ge 1:\,F'_j \ne 0\}|$.
\end{itemize}
In particular the distribution of $F_1(X_t),\dots,F_n(X_t)$, and hence that of the stopping time $T$,
is independent of the initial cladograms $x,y \in \cC_n$.

\begin{remark}\label{rmk:partition_chain_in_Petrov}
    Reordering the vector $(F_1,\dots,F_n)$ in nonincreasing order yields a Markov chain on {\em integer partitions} of $n$.
    This Markov chain is a down-up chain similar to those studied by Petrov\footnote{Petrov studied up-down chains rather than down-up chains, in the sense that, the two half-steps are interchanged in his construction compared to what we are doing here.
    This does not change the nature of the results.} in \cite{petrov2009diffusions-Kingman}
    with parameters $\theta =-3/2$ and $\alpha=1/2$
    (see also \cite[Section 5.3]{FerayRiveraLopez2025UpDownChainsAndScalingLimits}).
    The results of these papers however do not apply directly since they assume $\alpha+\theta>0$,
    which is a necessary condition for the chain to be irreducible, while here we are in a parameter regime
    where we cannot create new nonempty parts when we have only three of them.
    Nevertheless, it might be possible to find some eigenvector/eigenvalue decomposition for the transition operator
    of the chain and to compute the distribution of the stopping time $T$.
    We shall follow a more direct path here, and bound the tail probability $\bbP[T \ge t]$
    by introducing a relevant statistic.
\end{remark}

\subsection{A useful statistic with a drift}\label{s: a useful statistic}
We define the following statistic on colored cladograms: for $X\in \widetilde{\cC}_n$, we set:
\begin{equation}
    \varphi(X) = \sum_{i=1}^n F_i(X)^2,
\end{equation}
where we emphasize that the sum starts at $i=1$.
The key observation, proved below, is that applying a single color step 
increases the value of this statistic by at least $1$ on average.
Heuristically, since the statistic cannot exceed $n^2$, this implies that the coupling
must stop after roughly $n^2$ steps, i.e.~$T=O(n^2)$ (in probability).
This implication will be made rigorous in Section~\ref{s: bounding the coupling time}.
\medskip

Let $X\in \widetilde{\cC}_n$ be a (nonrandom) colored cladogram with $n$ leaves.
\textbf{We assume that $X$ contains at least four colors.}
We denote the colored cladogram obtained from $X$ after the first colored half-step by $X'$, and the colored cladogram obtained after the second half step by $X''$, as defined in Section~\ref{s: colored steps}.

\begin{lemma}\label{lem: expected drift}
We have $\bbE\cg \varphi(X'')- \varphi(X)\cd \geq 1$.
\end{lemma}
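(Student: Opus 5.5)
The plan is to compute $\bbE\cg\varphi(X'')-\varphi(X)\cd$ \emph{exactly}, using only the projected dynamics on the vector $(F_1,\dots,F_n)$ described above. Write $F_i=F_i(X)$, $\varphi=\varphi(X)$, and split the increment as $\bigl(\varphi(X')-\varphi(X)\bigr)+\bigl(\varphi(X'')-\varphi(X')\bigr)$. Condition on $X'$ for the second term and then average over the first half-step. The hypothesis that $X$ has at least four colors is used only to guarantee that the colored step is well defined. In particular, the first half-step then removes one leaf from a single color $i$, which changes $F_i$ to $F_i-1$, and $X'$ still has $C'\ge 3$ colors.

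\textbf{First half-step.} The removed leaf has color $i$ with probability $F_i/n$, and then $\varphi$ changes by $(F_i-1)^2-F_i^2=-(2F_i-1)$. Since $\sum_i F_i=n$, this gives
\[
\bbE\cg\varphi(X')-\varphi(X)\cd=-\frac1n\sum_{i}F_i(2F_i-1)=1-\frac{2\varphi}{n}.
\]

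\textbf{Second half-step, given $X'$.} Write $F'_j=F_j(X')$, $\varphi'=\varphi(X')$ and $C'=|\{j:\,F'_j\ne0\}|$. The cladogram $X'$ has $2n-5$ edges. Of these, $\sum_{j:F'_j>0}(2F'_j-1)=2(n-1)-C'$ are colored and $C'-3$ are uncolored, which is consistent with the transition probabilities stated above.

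Reinserting on an edge of color $j$ raises $\varphi$ by $2F'_j+1$. Reinserting on an uncolored edge creates a new singleton color and raises $\varphi$ by $1$. Hence
\[
\bbE\cg\varphi(X'')-\varphi(X')\,\big|\,X'\cd=\frac{\sum_{j:F'_j>0}(4F'^2_j-1)+C'-3}{2n-5}=\frac{4\varphi'-3}{2n-5}.
\]
The $-C'$ and $+C'$ cancel exactly here. This cancellation is the one point that needs care, and it is what makes the final bound independent of the number of colors.

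\textbf{Combining.} From the first half-step, $\bbE[\varphi']=\varphi(1-2/n)+1$. Adding the two contributions gives
\[
\bbE\cg\varphi(X'')-\varphi(X)\cd=1-\frac{2\varphi}{n}+\frac{4\varphi(n-2)/n+1}{2n-5}=1+\frac{1}{2n-5}+\frac{2\varphi}{n(2n-5)},
\]
because $\frac{4(n-2)}{n(2n-5)}-\frac2n=\frac{2}{n(2n-5)}$. Every term is nonnegative for $n\ge4$, so the expectation is at least $1$.

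I do not expect a real obstacle: the proof is a direct calculation. The only delicate point is the bookkeeping of uncolored edges in $X'$, namely that there are exactly $C'-3$ of them, together with the check that the four-color assumption excludes the degenerate merging case.
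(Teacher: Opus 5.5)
Your proof is correct and is essentially the paper's computation. You use the same split into the two half-steps, the same first-half-step drift $1-2\varphi(X)/n$, and reach the same final value $1+\frac{1}{2n-5}+\frac{2\varphi(X)}{n(2n-5)}$. The only cosmetic difference is in the second half-step: the paper writes the increment as a baseline $1$ plus an extra $2F'_i$ on edges of color $i$, so it never counts the $C'-3$ uncolored edges, whereas you count them explicitly and observe that $C'$ cancels.
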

\begin{proof}
Throughout the proof we write $F_i=F_i(X)$ and $F'_i=F_i(X')$ to lighten the notation.
    We first compute $\bbE\cg \varphi(X')- \varphi(X)\cd$. For each $i \in \{1,\dots,n\}$, the probability 
    that the uniformly chosen leaf $\ell$ has color $i$ is $F_i/n$. In this case, one has 
    \[\varphi(X')- \varphi(X) = (F_i-1)^2 - F_i^2=-2 F_i + 1. \]
    Hence we have
    \begin{equation}\label{eq:phiX'-phiX}
        \bbE\cg \varphi(X')- \varphi(X)\cd = \sum_{i=1}^n \tfrac{F_i}n \, (-2 F_i + 1) = -\frac{2\varphi(X)}{n} + 1,
    \end{equation}
    where we used that $\sum_{i=1}^n F_i=n$.

    We now study $\varphi(X'')- \varphi(X')$, conditionally on $X'$. For $i \le n$, recall that $F'_i$ is the number of
    leaves of color $i$ in $X'$. Note that $\sum_{i=1}^n F'_i=n-1$ since $X'$ has $n-1$ leaves.
    When $F'_i>0$, the number of edges with color $i$ in $X'$ is then $2F'_i-1$,
    while the total number of edges of $X'$ is $2n-5$. Hence, the probability to re-attach the removed leaf on an edge of color $i$ is $(2F'_i-1)/(2n-5)$.
    In this case, we have
    \[\varphi(X'')- \varphi(X') = (F'_i+1)^2-F_i'^2 = 2F'_i+1.\]
    In the remaining cases, i.e.~when the removed leaf is attached to an uncolored edge,
    attaching the new edge creates a new component of size $1$ and we have
    \[\varphi(X'')- \varphi(X') =1.\]
    Hence, we get
    \begin{align} 
        \bbE\cg \varphi(X'')- \varphi(X')|X'\cd &= 1 + \sum_{i : F'_i>0} \frac{2F'_i-1}{2n-5}\, (2F'_i) \nonumber\\
    &= 1+\frac{4 \varphi(X')}{2n-5} - \frac{2(n-1)}{2n-5}.\label{eq:phiX''-phiX'}
    \end{align}
The rest is a direct computation. Adding up \eqref{eq:phiX'-phiX} and the expectation of \eqref{eq:phiX''-phiX'}, we get
\[\bbE\cg \varphi(X'')- \varphi(X)\cd = 2 - \frac{2(n-1)}{2n-5} + \frac{4 \bbE \cg \varphi(X') \cd}{2n-5}  -\frac{2\varphi(X)}{n}.\]
Using \eqref{eq:phiX'-phiX} again gives us
\begin{align} \bbE\cg \varphi(X'')- \varphi(X)\cd &= 2 - \frac{2(n-1)}{2n-5} +\frac{4}{2n-5} +\varphi(X)\left(
 \frac{4}{2n-5} - \frac{8}{n(2n-5)} - \frac2n \right)\label{eq:drift}\\
 &=\frac{2n-4}{2n-5} + \varphi(X) \frac{2}{n(2n-5)} \ge 1.\qedhere
 \end{align}
\end{proof}
\begin{remark}
    Interestingly, an asymptotic version of Eq.~\eqref{eq:drift} has previously appeared 
    in~\cite{petrov2009diffusions-Kingman} (although in a different parameter regime; recall Remark~\ref{rmk:partition_chain_in_Petrov}).
    Indeed, let us define the  renormalization 
    $\tilde \varphi_n(t)=n^{-2} \varphi(X_{\lfloor n^2 t\rfloor})$,
    and assume that $\tilde \varphi_n(t)$ converges to some continuous function $\tilde \varphi(t)$ for large $n$. 
    Then Eq.~\eqref{eq:drift}, applied $\lf n^2 \dt \rf$ times, implies that
    \[\bbE[\tilde \varphi(t+\dt) -\tilde \varphi(t) | \mathcal F_t]
    = \dt(1 + \tilde \varphi(t)) +o(\dt),\]
    where $\mathcal F_t$ is the $\sigma$-algebra generated by $(\tilde \varphi(u))_{u \le t}$.
    Translated in the formalism of generators, this corresponds to \cite[Eq.~(1)]{petrov2009diffusions-Kingman}, specialized to $\theta =-3/2$ and $\alpha=1/2$, and applied to $q_1$. This asymptotic version is, however, not sufficient for our purposes.
\end{remark}

\subsection{Bounding the coupling time}\label{s: bounding the coupling time}

We now want to bound the tail probabilities of the stopping time $T$.
Let $n\geq 4$, $x\in \cC_n$ and set $X_0 = \AddColors(x)$, as in the initial configuration of the coupling. 
For $t\geq 0$, let $X_{t+1}$ be the (random) colored configuration obtained by applying one colored step to $X_t$. Recall that $T$ is the first time at which $X_t$ has 3 colors. In particular, for $t<T$, $X_t$ has at least $4$ colors, so we can apply the drift estimate from Lemma~\ref{lem: expected drift}.
\medskip

Since $\varphi(X_t)$ cannot exceed $n^2$ and $\varphi(X_T)$ is of order $n^2$,
the stopping time $T$ looks roughly like the exit time from the domain $[0,n^2]$ of the biased random walk $(\varphi(X_t))_{t\geq 0}$ (note however that the process $(\varphi(X_t))_{t\geq 0}$ is not Markovian and that the stopping time $T$ cannot be defined using $\varphi(X_t)$ only).
Much is known about such exit times, see e.g.~\cite{Foster1953} or~\cite[Chapter 11]{MeynTweedie2009}. 
In particular, the following lemma uses standard arguments; we provide a proof for completeness.
\begin{lemma}\label{lem: drift into proba}
    Let $\beta\geq 1$ be an integer. Then $\bbP(T > \beta\, n^2) \leq 1/\beta$.
\end{lemma}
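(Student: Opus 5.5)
The plan is a standard optional-stopping argument applied to the process $M_t = \varphi(X_{t\wedge T}) - (t\wedge T)$. By Lemma~\ref{lem: expected drift}, for $t<T$ the colored cladogram $X_t$ has at least four colors. Conditionally on $\mathcal F_t$, the $\sigma$-algebra generated by $X_0,\dots,X_t$, we therefore have $\bbE[\varphi(X_{t+1})-\varphi(X_t)\mid \mathcal F_t]\ge 1$ on the event $\{T>t\}$. Here we use that the chain is Markov and that Lemma~\ref{lem: expected drift} holds for every deterministic configuration with at least four colors. It follows that $(M_t)_{t\ge 0}$ is a submartingale with respect to $(\mathcal F_t)$: on $\{T\le t\}$ the increment $M_{t+1}-M_t$ is zero, and on $\{T>t\}$ its conditional expectation is $\bbE[\varphi(X_{t+1})-\varphi(X_t)\mid\mathcal F_t]-1\ge 0$.

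Next, take expectations at the deterministic time $t=\beta n^2$, which avoids any integrability issue with $T$ itself. This gives $\bbE[M_{\beta n^2}]\ge M_0=\varphi(X_0)=n$, since initially all $n$ components have size $1$. Rearranging,
\[
\bbE\big[T\wedge \beta n^2\big] \le \bbE\big[\varphi(X_{T\wedge \beta n^2})\big] - n .
\]
To bound the right-hand side, note that $\varphi(X)=\sum_i F_i(X)^2\le \big(\sum_i F_i(X)\big)^2=n^2$ for every colored cladogram, since the $F_i$ are nonnegative. Hence $\bbE[T\wedge\beta n^2]\le n^2-n< n^2$.

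Markov's inequality then concludes. Since $T>\beta n^2$ implies $T\wedge\beta n^2=\beta n^2$, we get
\[
\beta n^2\,\bbP(T>\beta n^2)\le \bbE[T\wedge \beta n^2]\le n^2,
\]
that is, $\bbP(T>\beta n^2)\le 1/\beta$.

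The only point needing care is the submartingale step. The bound from Lemma~\ref{lem: expected drift} has to be applied conditionally on the past, and only on the event $\{T>t\}$; that event is $\mathcal F_t$-measurable, because the number of colors of $X_t$ is determined by $X_t$. Everything else is routine.
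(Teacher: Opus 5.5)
Your proof is correct and is essentially the paper's argument: both sum the drift bound $\bbE[\varphi(X_{t+1})-\varphi(X_t)\mid T>t]\ge 1$ from Lemma~\ref{lem: expected drift} up to time $T\wedge \beta n^2$ and use $\varphi\le n^2$. The only cosmetic difference is that you pass through $\bbE[T\wedge\beta n^2]=\sum_{t<\beta n^2}\bbP(T>t)\le n^2$ and then apply Markov's inequality, whereas the paper bounds each $\bbP(T>t)$ below by $\bbP(T>\beta n^2)$ before summing, which amounts to the same thing.
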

\begin{proof}
  Set $m=\beta\, n^2$. Let $0\leq t\leq m$. By monotonicity, we have $\bbP(T>m)\leq \bbP(T>t)$. Also, we have $\bbE\cg \varphi(X_{t+1}) - \varphi(X_t) \mid T>t \cd\geq 1$ by Lemma~\ref{lem: expected drift}. Multiplying these two inequalities yields
    \begin{equation}\label{eq: eq inter in lem: drift into proba for each t}
       \bbP(T>m) \leq  \bbP(T>t)\, \bbE\cg \varphi(X_{t+1}) - \varphi(X_t) \mid T>t \cd = \bbE \cg \varphi(X_{(t+1)\wedge T}) - \varphi(X_{t\wedge T})\cd.
    \end{equation}
Averaging \eqref{eq: eq inter in lem: drift into proba for each t} for $t$ from $0$ to $m-1$, and using that $0\leq \varphi(x) \leq n^2$ for any $x\in \widetilde{\cC}_n$, we obtain that $\bbP(T>m) \leq \bbE\cg \varphi(X_{m\wedge T}) - \varphi(X_0)\cd/m \leq n^2/m = 1/\beta$, which concludes the proof.
\end{proof}

\subsection{Proof of Theorem~\ref{thm: main mixing time cladograms}}
By Lemmas~\ref{lem:dtv-tailT} and~\ref{lem: drift into proba}, for any integer $\beta\geq 1$, we have
\begin{equation}
    \textup{d}^{(n)}(\beta n^2) \leq \bbP(T>\beta n^2) \leq 1/\beta,
\end{equation}
where $\textup{d}^{(n)}(\cdot)$ was defined in \eqref{eq: def d n t}.
This implies, taking $\beta = 4$, that $t_{\textup{mix}}^{(n)} = O(n^2)$. Since the lower bound was proved in \cite[Section 2]{Aldous2000MixingCladograms}, the proof of Theorem~\ref{thm: main mixing time cladograms} is complete.

\section*{Acknowledgements}
This work was supported by the ANR project LOUCCOUM (ANR-24-CE40-7809).

\section*{Use of AI}
The key arguments were developed by the authors. ChatGPT 5.6 Sol was used to obtain a short proof for the exit time argument in Section~\ref{s: bounding the coupling time}, and to help with figure generation and proofreading. The authors take full responsibility for the content of the paper.

\footnotesize{
\bibliography{bibliographieLucas}
\bibliographystyle{alpha}}

\end{document}